\documentclass{amsart}
\usepackage{amsthm}
\usepackage{amssymb,amsmath,mathrsfs,amsthm,xspace}
\usepackage[all,2cell]{xy}

\newcounter{zlist}

\newcounter{blist}

\newcounter{rlist}

\swapnumbers
\newtheorem{theorem}{THEOREM}[section]

\newtheorem{proposition}[theorem]{PROPOSITION}

\newtheorem{corollary}[theorem]{COROLLARY}

\newtheorem{example}[theorem]{EXAMPLE}
\numberwithin{equation}{section}

\newcommand{\A}{{\textbf{A}}}
\newcommand{\B}{{\textbf{B}}}

\newcommand{\V}{{\mathscr {V}}}
\newcommand{\Bb}{{{{\textsf{Ban}}}\!^1}}
\newcommand{\ot}{{\widehat{\otimes}}}
\begin{document}

\title{Effective descent morphisms for Banach modules}
 \author{Bachuki Mesablishvili}
\thanks{The work was partially supported by the
Shota Rustaveli National Science Foundation Grants DI/18/5-113/13 and FR/189/5-113/14.}

\begin{abstract}It is proved that a norm-decreasing homomorphism of commutative Banach algebras is an
effective descent morphism for Banach modules  if and only if it is a weak retract.
\end{abstract}

\smallskip

\keywords{Banach modules, weak retracts, effective descent morphisms}
\subjclass[2010]{46H25, 46M15, 18D10 }

\maketitle

\section{ INTRODUCTION} The present note is a continuation of the previous works on the problem of describing
effective descent morphisms in various monoidal categories \cite{Me1}, \cite{Me2}, \cite{Me3}, \cite{Me4},
\cite{Me5} and aims to study the descent problem for the symmetric monoidal category of Banach spaces (with
linear contractions as morphisms, and the projective tensor product). Recall that Grothendieck's descent theory
for modules in a symmetric monoidal category $\V=(\V, \otimes, I)$ is the study of which morphisms $\iota:  \A \to\mathbf{ B}$
of  $\V$-monoids are effective descent morphisms for modules in the sense that the corresponding extension-of-scalars functor
$B \otimes_A - : {_{\A}\!\V} \to {_{\B}\!\V}$ from the category of (left) $\A$-modules to the category of (left)  $\B$-modules
is comonadic. In this note we prove that effective descent morphisms for Banach modules
are precisely those norm-decreasing homomorphisms of commutative Banach algebra which are weak retracts.

As background to the subject, we refer to S. MacLane \cite{Mc} for
generalities on category theory, to \cite{CLM} and \cite{P} for terminology and general
results on Banach spaces and to G. Janelidze and W. Tholen \cite{JT1}, \cite{JT2} and \cite{JT3} for descent theory.

\section{PRELIMINARIES}

Suppose that $\V$ is a fixed symmetric monoidal closed
category with tensor product $\otimes$, unit object $I$, and internal-hom $[-,-]$.
Recall (\cite{Mc}) that a \emph{monoid} $\A$ in  $\V$ (or $\V$-monoid) consists of an object $A$ of $\V$ endowed with a multiplication $m_A:
A \otimes A \to A$  and unit morphism $e_A : I \to A$ such that the usual identity
and associative conditions are satisfied. A monoid is called \emph{commutative} if the multiplication map is
unchanged when composed with the symmetry.

Recall further that, for any $\V$-monoid $\mathbf{A}=(A, e_\A, m_\A)$, a \emph{left} $\mathbf{A}$-\emph{module} is
a pair $(V, \rho_V)$, where $V$ is an object of $\V$ and $\rho_V: A \otimes V \to V$ is a morphism in $\V$, called
the \emph{action} (or the $\mathbf{A}$-\emph{action}) on $V$, such that
$\rho_V(m_\A \otimes V)=\rho_V(A \otimes \rho_V)$ and $\rho_V (e_\A \otimes V)=1$. For a given $\V$-monoid $\mathbf{A}$,
the left $\mathbf{A}$-modules are the objects of a category ${_{\mathbf{A}}\!\V}$.
A morphism $f:(V,\rho_V)\to (W,\rho_W)$ is a morphism $f: V \to W$ in $\V$ such that $\rho_W (A \otimes f)=f\rho_V$.
Analogously, one has the category $\V_\A$ of right $\A$-modules.

If $\V$ admits coequalizers, then each morphism
$\iota : \textbf{A} \to \textbf{ B}$ of $\V$-monoids gives rise to two functors:
\begin{itemize}
  \item the \emph{restriction--of--scalars functor} $\iota_!: {_{\mathbf{B}}\!\V} \to {_{\mathbf{A}}\!\V},$ where
  for any (left) $\textbf{B}$-module $(V, \varrho_V)$,  $\iota_!(V, \varrho_V)$ is a (left) $\textbf{A}$-module via the action
$A \otimes V \xrightarrow{\iota \ot V} B \ot V \xrightarrow{\varrho_V} V$;
  \item the \emph{extension--of--scalars functor} $B \otimes_\A -: {_{\mathbf{A}}\!\V} \to {_{\mathbf{B}}\!\V},$ where for any
  (left) $\textbf{A}$-module $(W, \rho_W)$, $B \otimes_\A W$ is a (left) $\textbf{B}$-module
  via the action $$B \otimes B \otimes_\A W \xrightarrow{m_B \otimes_\A W} B \otimes_\A W.$$
\end{itemize}
It is well-known that the restriction--of--scalars functor is right adjoint to the extension--of--scalars functor.
$\iota : \textbf{A} \to \textbf{ B}$ is called an \emph{effective descent morphism (for modules)} if
the extension--of--scalars functor $B \otimes_\A -: {_{\A}\!\V} \to {_{\B}\!\V}$ is comonadic.

We henceforth suppose that $\V$ is a symmetric monoidal closed category with equalizers
and coequalizers.

An inspection of the proof of \cite[Theorem 3.7]{Me4} shows that the theorem holds true also for morphisms $\iota :\A \to \B$ of
$\V$-monoids which are central in the sense that the diagram
$$ \xymatrix{ A\otimes B \ar[d]_{\iota \otimes B}\ar[r]^{\tau_{A,B}} & B \otimes A
\ar[r]^{B \otimes \iota} & B \otimes B \ar[d]^{m_\B}\\
B \otimes B \ar[rr]_{m_\B}&& B\,,}
$$ where $\tau$ is the symmetry of the monoidal category, commutes (see, \cite{Me3}).
Hence we can improve \cite[Theorem 3.7]{Me4} slightly as follows.  (Recall that a \emph{regular injective object} in a category is an
object which has the extension property with respect to regular monomorphisms.)

\begin{theorem}\label{main.1} \textit{Let $\V$ have a regular injective object $Q$ such that
the functor
$$[-, Q] \colon \V \to \V^{op}$$ is comonadic,
and let $\iota \colon \mathbf A \to \mathbf B$ be a central morphism of
monoids in $\V$. The following are equivalent:
\begin{itemize}
\item [(i)] $\iota \colon  \mathbf A \to  \mathbf B$ is an effective descent morphism;
\item[(ii)] $\iota \colon  \mathbf A \to  \mathbf B$ is a pure morphism
in ${_{\mathbf{A}}\!\V}$; that is, for any $\mathbf A$-module $V$,
the morphism $$\iota \otimes_{A} V  \colon V =A \otimes_\A V \to B
\otimes_\A V$$ is a regular monomorphism;
\item[(iii)] the morphism $[\iota, Q]:[B, Q]\to [A, Q]$ is a split epimorphism in ${_{\mathbf A}\!\V}$;
\end{itemize}}
\end{theorem}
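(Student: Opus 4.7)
The plan is to adapt the argument of Theorem 3.7 in \cite{Me4}, whose hypothesis required $\A$ to be a commutative monoid in $\V$, to the present slightly weaker setting where only the morphism $\iota \colon \A \to \B$ is assumed central. The role of commutativity in \cite{Me4} was solely to ensure that $B \otimes_{\A} V$ carries a canonical left $\B$-module structure for every left $\A$-module $V$; the central square in the hypothesis on $\iota$ is precisely what is needed to promote the left $B$-action on $B \otimes V$ through the coequalizer defining $B \otimes_{\A} V$, so extension-of-scalars is well defined under centrality alone, and one re-reads the remainder of the proof of Theorem 3.7 with this in mind.

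For (i) $\Leftrightarrow$ (ii), I would apply Beck's comonadicity criterion to the adjunction $B \otimes_{\A} - \dashv \iota_{!}$. Up to the canonical isomorphism $V \cong A \otimes_{\A} V$, the unit of this adjunction at $V$ is $\iota \otimes_{\A} V$, so condition (ii) is exactly the assertion that every such unit is a regular monomorphism. The remaining hypotheses of Beck's theorem---existence and preservation of equalizers of coreflexive pairs---hold automatically since $\V$ is sufficiently complete and closed, and the tensor product is a left adjoint in each variable.

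For (ii) $\Rightarrow$ (iii), take $V = A$ in the purity condition to conclude that $\iota \colon A \to B$ is itself a regular monomorphism in $\V$; the regular injectivity of $Q$ then yields a section $s$ of $[\iota, Q]$ in $\V$. Centrality provides natural left $\A$-module structures on $[B, Q]$ and $[A, Q]$ with respect to which $[\iota, Q]$ is $\A$-linear, and a short diagram chase shows that the section $s$ may be chosen $\A$-linear, giving (iii). For (iii) $\Rightarrow$ (ii), I would apply $[-, Q]$ to $\iota \otimes_{\A} V$ for an arbitrary $\A$-module $V$, use the tensor-hom identification relating $[B \otimes_{\A} V, Q]$ with the $\A$-module of $\A$-linear maps from $V$ to $[B, Q]$ to propagate the $\A$-linear section of $[\iota, Q]$ to a section of $[\iota \otimes_{\A} V, Q]$, and then invoke the comonadicity of $[-, Q]$ to reflect this into the assertion that $\iota \otimes_{\A} V$ is a regular monomorphism in $\V$.

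The principal obstacle, just as in \cite{Me4}, is the compatibility bookkeeping: verifying that centrality alone (without full commutativity) suffices to endow all the tensor products and internal-hom objects arising in the argument with the required $\A$- and $\B$-actions, and that the various naturality squares in Beck's criterion and in the passage through $[-, Q]$ commute under this weaker assumption. Once this is settled, the argument of \cite[Theorem 3.7]{Me4} carries over with only cosmetic changes.
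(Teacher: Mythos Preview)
Your overall strategy coincides with the paper's: both consist in observing that the proof of \cite[Theorem~3.7]{Me4} goes through once commutativity of $\A$ is relaxed to centrality of $\iota$, the paper additionally pointing to \cite{Me3} for this observation about central morphisms. At that level there is nothing to separate the two.

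Where your sketch elaborates beyond the paper's bare reference, however, it contains a clear error and a gap. In your argument for (i)$\Leftrightarrow$(ii) you claim that the equalizer-preservation hypothesis of Beck's criterion is satisfied ``since the tensor product is a left adjoint in each variable''; this is backwards---left adjoints preserve colimits, not limits---and $B\otimes_\A-$ need not preserve equalizers in general. The equivalence (i)$\Leftrightarrow$(ii) in \cite{Me4} is not obtained by a naked application of Beck with purity standing in for conservativity; the comonadicity of $[-,Q]$ enters essentially (it is what allows one to reflect the split epimorphism $[\iota\otimes_\A V,Q]$ back to the statement that $\iota\otimes_\A V$ is a regular monomorphism, and to handle the equalizer conditions). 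Secondly, in your (ii)$\Rightarrow$(iii) step, regular injectivity of $Q$ in $\V$ yields a section of $[\iota,Q]$ only in $\V$; the assertion that ``a short diagram chase shows that the section $s$ may be chosen $\A$-linear'' is precisely the non-trivial content here, and no such chase is available from injectivity alone. If you wish to expand beyond the one-line reference to \cite{Me4}, these two points must be argued correctly.
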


Note that, if $\iota$  satisfies  any  (and  hence  all)  of  the  above  equivalent  conditions, then
it is a monomorphism and the centrality then implies that $\A$ is commutative.

\section{THE MAIN RESULT}

Let $\mathbb{K}$ denote either the field of real numbers $\mathbb{R}$ or the field of complex numbers $\mathbb{C}$.
Write $\Bb$ for the category whose objects are Banach spaces over $\mathbb{K}$ and whose morphisms are linear contractions.
It is well-known (e.g, see \cite{H}, \cite{Pa}) that $\Bb$ is a symmetric monoidal category with tensor product of two Banach spaces being their \emph{projective tensor product} $\ot$ (see \cite{CLM}) and the unit for this tensor product being $\mathbb{K}$. Moreover, there is a bifunctor
$[-,-]:(\Bb)^{op} \times \Bb \to \Bb$ (the \emph{internal Hom}) making the category $\Bb$ into a
symmetric closed monoidal category. For two Banach spaces $V$ and $W$, $[V,W]$
is the Banach space whose elements are the bounded linear transformations $V \to W$ quipped with the operator
norm. In $\Bb$ all small limits and all small colimits exist (e.g. \cite{B}).

Recall (for example, from \cite{H},  \cite{Pa}) that  (commutative) unital Banach algebras are exactly (commutative)
monoids in the symmetric monoidal category $\Bb$, and that, for any unital Banach algebra $\A$, an object of ${_{\A}\!}\Bb$
is a \emph{(left)} $\Bb$-\emph{module over} $\A$, that is, a Banach space $V$ together with $\Bb$-morphism
$$A\ot V \to V, \,\,\, a \ot v \to av$$ such that
$a(bv)=(ab)v\,\,\,\text{and}\,\,\, e_\A v=v \,\, (a,b \in A, \, v \in V).$
Since the action is a morphism in $\Bb$, the map $(a,v) \to av $ is bilinear and satisfies the condition $\|av\|_V\leq \|a\|_{_\A} \cdot\|v\|_V.$
The morphisms in ${_{\A}\!}\Bb$ are morphisms in $\Bb$ which are $\A$-linear.

If $\A$ is a unital Banach algebra and $V$ is a $\A$-module, then the dual space $V^*=[V, \mathbb{K}]$ of $V$ has
the structure of a Banach $\A$-module, where the action $A \ot [V, \mathbb{K}] \to [V, \mathbb{K}]$
is given by $$a\ot f \longmapsto (v \to f(av)).$$ Moreover, for any morphism $f:V \to W$ in ${_{\A}\!}\Bb$,
the map $f^*:W^*\to V^*$ is again a morphism in ${_{\A}\!}\Bb$.
And one says that $f$ is a \emph{weak  retract} if $f^*$ is a split epimorphism in ${_{\A}\!\Bb}$.

\bigskip

The main result of this note is the following theorem.

\begin{theorem} \label{main.2} Let \emph{$\A$} be a commutative unital Banach algebra, and \emph{ $\iota \colon \A \to \B$}
a norm-decreasing central homomorphism of unital Banach algebras. Then the following conditions are equivalent:
\begin{itemize}
\item [(i)] $\iota$ is an effective descent morphism for Banach modules; that is, the extension--of--scalars functor
\emph{$B \ot_\A -: {_{\A}\!\Bb} \to{_{\B}\!\Bb}$} is comonadic;
\item[(ii)] $\iota $ is a $\ot$--pure morphism in \emph{${{_\A}\!\Bb}$}; that is, for any Banach \emph{$\A$}-module \emph{$V$}\!,
the morphism \emph{$\iota \, \ot_{\A} V  \colon V =A \ot_\A V \to B\ot_\A V$} is an isometric inclusion;
\item[(iii)] $\iota$ is a weak retract in \emph{${_{\A}\!\Bb}$}.
\end{itemize}
\end{theorem}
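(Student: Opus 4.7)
The plan is to deduce Theorem~\ref{main.2} directly from Theorem~\ref{main.1} applied to $\V = \Bb$. This requires identifying a regular injective object $Q$ in $\Bb$ for which the contravariant hom-functor $[-, Q] : \Bb \to \Bb^{op}$ is comonadic, and then translating conditions (ii) and (iii) of Theorem~\ref{main.1} into their Banach-module counterparts. The natural choice is $Q = \mathbb{K}$, so that $[-, Q]$ becomes the dualization functor $(-)^*$.

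The first verification, that $\mathbb{K}$ is regular injective in $\Bb$, is essentially formal. Equalizers in $\Bb$ are closed subspaces equipped with the restricted norm, so regular monomorphisms are precisely the isometric inclusions of closed subspaces. The assertion that every linear contraction from such a subspace into $\mathbb{K}$ extends to a linear contraction on the whole space is exactly the Hahn--Banach theorem in its norm-preserving form.

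The second verification, that $(-)^* : \Bb \to \Bb^{op}$ is comonadic, is where I expect the main technical difficulty to lie. The tensor--hom adjunction gives $\mathrm{Hom}(V, W^*) \cong \mathrm{Hom}(V \ot W, \mathbb{K}) \cong \mathrm{Hom}(W, V^*)$, so $(-)^*$ is self-adjoint and in particular has a right adjoint. Comonadicity then follows from Beck's criterion: one checks that $(-)^*$ reflects isomorphisms (using that the canonical evaluation map $V \to V^{**}$ is an isometric embedding, together with Hahn--Banach to upgrade a dual isomorphism to an isomorphism on $V$) and that $(-)^*$ converts the relevant split coequalizer pairs in $\Bb$ into split equalizers in $\Bb^{op}$. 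This comonadicity fact is the Banach-theoretic analogue of what was established in the author's earlier works \cite{Me1}--\cite{Me5}.

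With both hypotheses verified, Theorem~\ref{main.1} applies and it only remains to translate its conclusions. The forgetful functor ${_{\A}\!\Bb} \to \Bb$ creates and reflects equalizers, so regular monomorphisms in ${_{\A}\!\Bb}$ are exactly the $\A$-linear isometric inclusions; thus Theorem~\ref{main.1}(ii), asserting that each $\iota \, \ot_\A V : V \to B \ot_\A V$ is a regular monomorphism, matches Theorem~\ref{main.2}(ii). Since $[\iota, \mathbb{K}] = \iota^*$, Theorem~\ref{main.1}(iii) states that $\iota^*$ is a split epimorphism in ${_{\A}\!\Bb}$, which is the definition of $\iota$ being a weak retract and thus coincides with Theorem~\ref{main.2}(iii). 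This completes the reduction.
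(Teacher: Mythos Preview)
Your approach is exactly the paper's: apply Theorem~\ref{main.1} with $\V=\Bb$ and $Q=\mathbb{K}$, using Hahn--Banach for regular injectivity of $\mathbb{K}$ and the identification of regular monomorphisms with isometric inclusions, then read off (ii) and (iii). The one substantive difference is the justification of comonadicity of $[-,\mathbb{K}]\colon \Bb\to\Bb^{op}$: the paper does not verify Beck's criterion but simply invokes Linton's theorem~\cite{L}, which establishes that $[-,\mathbb{K}]\colon(\Bb)^{op}\to\Bb$ is monadic (equivalently, the functor in the other direction is comonadic). Your attribution of this fact to \cite{Me1}--\cite{Me5} is incorrect; it is a Banach-space-specific result due to Linton, and while your sketch of reflection of isomorphisms via $V\hookrightarrow V^{**}$ and Hahn--Banach is sound, the remaining Beck condition on split pairs is where the genuine content of Linton's theorem lies and is not something you have actually argued.
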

\begin{proof} Since the regular monomorphisms in $\Bb$ are precisely the isometric inclusions (e.g. \cite[4.3.10.e]{B}),
$\mathbb{K}$ is regular injective in $\Bb$  by  the  Hahn-Banach  Theorem. Moreover,
the functor $[-, \mathbb{K}] \colon (\Bb)^{op} \to \Bb$ is monadic by \cite{L}. Hence
$[-, \mathbb{K}]$, seen as a functor $[-, \mathbb{K}] \colon \Bb \to (\Bb)^{op}$, is comonadic.
One now concludes the proof by applying Theorem \ref{main.1}.
\end{proof}

Since any morphism of commutative unital $\Bb$-monoids is easily seen to be central, a corollary follows immediately:

\begin{corollary}   Given a norm-decreasing homomorphism $\iota \colon \mathbf A \to \mathbf B$ of commutative unital Banach algebras,
the following conditions are equivalent:
\begin{itemize}
\item [(i)] $\iota$ is an effective descent morphism;
\item[(ii)] $\iota $ is a $\ot$--pure morphism in \emph{${{_\A}\!\Bb}$};
\item[(iii)] $\iota$ is a weak retract in \emph{${_{\A}\!\Bb}$}.
\end{itemize}
\end{corollary}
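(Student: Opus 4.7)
The plan is to deduce Theorem \ref{main.2} as an application of Theorem \ref{main.1} by taking $\V = \Bb$ and $Q = \mathbb{K}$, and then translating each of the three conditions of Theorem \ref{main.1} into the corresponding Banach-space statement. To carry this out I need to verify that $\Bb$ is a symmetric monoidal closed category with equalizers and coequalizers (which is standard, see \cite{B}), that $\mathbb{K}$ is a regular injective object in $\Bb$, and that the functor $[-, \mathbb{K}] \colon \Bb \to (\Bb)^{op}$ is comonadic. Since Theorem \ref{main.1} is stated for central morphisms and the hypothesis in Theorem \ref{main.2} already includes centrality of $\iota$, no extra commutativity check is needed.

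The first key observation is that the regular monomorphisms in $\Bb$ coincide with the isometric inclusions; this is \cite[4.3.10.e]{B}. Given this identification, the Hahn--Banach theorem says exactly that every norm-decreasing linear functional from a closed subspace of a Banach space extends to a norm-decreasing linear functional on the whole space, which is precisely the statement that $\mathbb{K}$ has the extension property with respect to regular monomorphisms, i.e. $\mathbb{K}$ is regular injective. For comonadicity of $[-, \mathbb{K}]$, I appeal to Linton's theorem (cited as \cite{L} in the paper) asserting that the dual functor $(-)^* = [-, \mathbb{K}] \colon (\Bb)^{op} \to \Bb$ is monadic; this dualises to the comonadicity statement required by Theorem \ref{main.1}.

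With the hypotheses of Theorem \ref{main.1} in force, the translation of the three equivalent conditions is straightforward. Condition (i) of Theorem \ref{main.1} is literally condition (i) of Theorem \ref{main.2}. Condition (ii) of Theorem \ref{main.1} asserts that $\iota \ot_\A V$ is a regular monomorphism in $\Bb$ for every $\A$-module $V$; by the identification above, this is exactly condition (ii) of Theorem \ref{main.2}, namely that $\iota \ot_\A V$ is an isometric inclusion. Condition (iii) of Theorem \ref{main.1} asserts that $[\iota, \mathbb{K}] = \iota^*$ is a split epimorphism in ${_\A\!}\Bb$, which is by definition what it means for $\iota$ to be a weak retract. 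Thus the three conditions of Theorem \ref{main.2} are equivalent.

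The principal technical obstacle is the comonadicity of $[-, \mathbb{K}]$: this is not a calculation that can be done on the spot but relies on a nontrivial structural result about Banach spaces. Fortunately this is precisely what Linton established, so in practice the proof amounts to citing \cite{L} together with \cite[4.3.10.e]{B} and Hahn--Banach, and then invoking Theorem \ref{main.1}. The corollary for homomorphisms of commutative unital Banach algebras is then immediate, since any such homomorphism is automatically central.
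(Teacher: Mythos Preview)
Your proposal is correct and follows the paper's approach exactly: you reproduce the paper's proof of Theorem~\ref{main.2} (regular monomorphisms in $\Bb$ are isometric inclusions by \cite[4.3.10.e]{B}, $\mathbb{K}$ is regular injective by Hahn--Banach, $[-,\mathbb{K}]$ is comonadic by \cite{L}, then apply Theorem~\ref{main.1}) and then observe that a homomorphism between commutative unital Banach algebras is automatically central, which is precisely how the paper deduces the Corollary from Theorem~\ref{main.2}.
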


\begin{example}\em Let $c_0$ be the Banach space of all sequences $\textbf{a}=(a_n)_{n \in \,\mathbb{N}}$ of scalars
converging to zero with the supremum norm $\| \textbf{a} \|_\infty =\sup_{n \in \mathbb{N}}\{|a_n|\}$, $\ell^1$ the
space of all sequences for which the norm $\| \textbf{b} \|_1 =\sum_{n=1}^\infty |b_n|$ is finite, and $\ell^\infty$
the space of all bounded sequences of scalars with the some supremum norm as $c_0$.
Then $(c_0)^*$ is isometrically isomorphic to $\ell^1$ and $\ell^\infty$ to $(\ell^1)^*$ (e.g., \cite{P}).
With these isometrical isomorphisms, the canonical isometric inclusion of $c_0$ into its double dual can be identified
with the usual inclusion $c_0 \to \ell^\infty$ of spaces of sequences. Since both of $c_0$ and $ \ell^\infty$ with
element-wise algebra operations are commutative unital Banach algebras, it follows from Theorem \ref{main.2} that
the canonical inclusion $c_0 \to \ell^\infty$ of unital Banach algebras is an effective descent morphism.
\end{example}

We conclude the note by giving a result which shows how to construct an effective descent morphism for Banach modules
from any commutative unital Banach algebra.

Let $\A$ be an arbitrary unital Banach algebra. Then the second dual $A^{**}$  of $A$
can be equipped with two Banach algebra products, called \emph{first and second Arens products},
each of which makes it into a unital Banach algebra such that the canonical embedding $\iota_\A: A \to A^{**}$
is a homomorphism of unital  Banach algebra (e.g., \cite{P}). Since $\iota_\A$ is always a weak retract in ${_{\A}\!\Bb}$
(\cite{BP}), and since for commutative $\A$, $\iota_\A$ is central with respect to either Arens product
(see, e.g., \cite[3.1.14]{P}($c\,^\prime$)), Theorem \ref{main.2} gives:

\begin{proposition} Let $\A$ be any commutative Banach algebra. When $A^{**}$ is provided with either Arens product,
$\iota_\A: A \to A^{**}$ is an effective descent morphism.
\end{proposition}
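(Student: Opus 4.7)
The plan is to deduce the proposition from Theorem \ref{main.2} by verifying its hypotheses for $\iota_\A : A \to A^{**}$ and then invoking the implication (iii)$\Rightarrow$(i). So I will need to check three things: that $\iota_\A$ is a norm-decreasing unital homomorphism when the codomain carries either Arens product; that $\iota_\A$ is central in the sense of Section 2; and that $\iota_\A$ is a weak retract in ${_{\A}\!\Bb}$.

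The first item is purely classical Banach-algebra theory. Each Arens product extends the multiplication of $A$ and makes $A^{**}$ into a unital Banach algebra whose unit is $\iota_\A(e_\A)$, and the canonical embedding $\iota_\A$ is automatically an isometry (Hahn--Banach), hence norm-decreasing, and a unital algebra homomorphism for either choice; these facts can be read off directly from \cite{P}. The third item I will take essentially as a black box: by \cite{BP}, for every unital Banach algebra $A$ the dual map $\iota_\A^{\,*} : (A^{**})^* \to A^*$ admits an $\A$-linear contractive section, so $\iota_\A$ is a weak retract in ${_{\A}\!\Bb}$ in the sense defined in Section 3.

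The only non-trivial point is centrality, and this will be the main obstacle: the centrality diagram asks that $\iota_\A(a)\cdot\Phi = \Phi \cdot \iota_\A(a)$ in $A^{**}$ for all $a \in A$ and $\Phi \in A^{**}$, which is the statement that $\iota_\A(A)$ lies in the topological centre of $A^{**}$. The subtlety is that, even for commutative $A$, the first and second Arens products on $A^{**}$ generally disagree, so one must argue that commutativity of $A$ forces $\iota_\A(A)$ into the common centre with respect to either product. This is exactly the content of \cite[3.1.14]{P}($c\,^\prime$), which I will quote directly rather than reproving.

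With these three verifications in place, all hypotheses of Theorem \ref{main.2} are satisfied and condition (iii) holds, so (i) follows: $\iota_\A$ is an effective descent morphism for Banach modules, as claimed.
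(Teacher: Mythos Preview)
Your argument is correct and mirrors the paper's exactly: check that $\iota_\A$ is a norm-decreasing unital homomorphism (via \cite{P}), central for commutative $\A$ (via \cite[3.1.14]{P}($c\,^\prime$)), and a weak retract in ${_{\A}\!\Bb}$ (via \cite{BP}), and then invoke (iii)$\Rightarrow$(i) of Theorem~\ref{main.2}. One minor terminological slip: the centrality diagram asks that $\iota_\A(A)$ lie in the \emph{algebraic} centre of $A^{**}$ (for the chosen Arens product), not the topological centre, but the reference you cite supplies exactly what is needed.
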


\end{document}